 \newtheoremstyle{mytheorem}
 {3pt}
 {3pt}
 {\slshape}
 {}
 {\bfseries}
 {.}
 { }
 {}
\numberwithin{equation}{section}
\theoremstyle{theorem}
\newtheorem{theorem}{Theorem}[section]
\newtheorem*{theorem*}{Theorem}
\newtheorem{lemma}[theorem]{Lemma}
\providecommand{\customgenericname}{}
\newcommand{\newcustomtheorem}[2]{%
	\newenvironment{#1}[1]
	{%
		\renewcommand\customgenericname{#2}%
		\renewcommand\theinnercustomgeneric{##1}%
		\innercustomgeneric
	}
	{\endinnercustomgeneric}
}
\theoremstyle{definition}
\newtheorem{definition}{Definition}[section]
\newtheorem{example}{Example}[section]
\newtheorem*{example*}{Example}
\newtheorem*{examples*}{Examples}
\newtheorem*{remark*}{Remark}
\newtheorem*{remarks*}{Remarks}
\newtheoremstyle{named}{}{}{\itshape}{}{\bfseries}{.}{.5em}{#1\thmnote{ #3}}
\theoremstyle{named}
\newcommand{\Keywords}[1]{\ifthenelse{\isempty{#1}}{}{\smallskip \smallskip \noindent \textbf{Keywords}. #1}}
\newcommand{\MSC}[2][2010]{\ifthenelse{\isempty{#2}}{}{\smallskip \smallskip \noindent \textbf{#1MSC}. #2}}
\newcommand{\abstractnote}[1]{\ifthenelse{\isempty{#1}}{}{\smallskip \smallskip \noindent \textsuperscript{\dag}#1}}
\def\specialsection{\@startsection{section}{1}%
  \z@{\linespacing\@plus\linespacing}{.5\linespacing}%
  {\normalfont}}
\def\section{\@startsection{section}{1}%
  \z@{.7\linespacing\@plus\linespacing}{.5\linespacing}%
  {\normalfont\scshape}}
\patchcmd{\@settitle}{\uppercasenonmath\@title}{\Large\boldmath}{}{}
\patchcmd{\@settitle}{\begin{center}}{\begin{flushleft}}{}{}
\patchcmd{\@settitle}{\end{center}}{\end{flushleft}}{}{}
\patchcmd{\@setauthors}{\MakeUppercase}{\normalsize}{}{}
\patchcmd{\@setauthors}{\centering}{\raggedright}{}{}
\patchcmd{\section}{\scshape}{\large\bfseries\boldmath}{}{}
\patchcmd{\subsection}{\bfseries}{\bfseries\boldmath}{}{}
\renewcommand{\@secnumfont}{\bfseries}
\patchcmd{\@startsection}{\@afterindenttrue}{\@afterindentfalse}{}{}
\patchcmd{\abstract}{\leftmargin3pc}{\leftmargin1pc}{}{}
\def\maketitle{\par
  \@topnum\z@ 
  \@setcopyright
  \thispagestyle{empty}
  \ifx\@empty\shortauthors \let\shortauthors\shorttitle
  \else \andify\shortauthors
  \fi
  \@maketitle@hook
  \begingroup
  \@maketitle
  \toks@\@xp{\shortauthors}\@temptokena\@xp{\shorttitle}%
  \toks4{\def\\{ \ignorespaces}}
  \edef\@tempa{%
    \@nx\markboth{\the\toks4
      \@nx\MakeUppercase{\the\toks@}}{\the\@temptokena}}%
  \@tempa
  \endgroup
  \c@footnote\z@
  \@cleartopmattertags
}
\newcommand{\bB}{\mathbf{B}}
\newcommand{\cB}{\mathcal{B}}
\newcommand{\cL}{\mathcal{L}}
\newcommand{\mA}{\mathscr{A}}
\newcommand{\mI}{\mathscr{I}}
\newcommand{\bA}{\mathbf{A}}
\newcommand{\bW}{\mathbf{W}}
\newcommand{\sfO}{\mathsf{O}}
\newcommand{\LHS}{\operatorname{LHS}}
\newcommand{\RHS}{\operatorname{RHS}}
\newcommand{\ubA}{\underline{\mathbf{A}}}
\newcommand{\uba}{\underline{\boldsymbol{\alpha}}}
\newcommand{\ubb}{\underline{\boldsymbol{\beta}}}
\newcommand{\ubgg}[1]{\underline{\boldsymbol{\gamma_{#1}}}}
\newcommand{\Mat}{\operatorname{Mat}}
\newcommand{\diag}{\operatorname{diag}}
\title{Linked partition ideals and a family of quadruple summations}
\author[G. E. Andrews]{George E. Andrews}
\address[G. E. Andrews]{Department of Mathematics, The Pennsylvania State University, University Park, PA 16802, USA}
\email{gea1@psu.edu}
\author[S. Chern]{Shane Chern}
\address[S. Chern]{Department of Mathematics and Statistics, Dalhousie University, Halifax, Nova Scotia, B3H 4R2, Canada}
\email{chenxiaohang92@gmail.com}
\date{}
\begin{document}

\maketitle

\begin{abstract}

Recently, $4$-regular partitions into distinct parts are connected with a family of overpartitions. In this paper, we provide a uniform extension of two relations due to Andrews for the two types of partitions. Such an extension is made possible with recourse to a new trivariate Rogers--Ramanujan type identity, which concerns a family of quadruple summations appearing as generating functions for the aforementioned overpartitions. More interestingly, the derivation of this Rogers--Ramanujan type identity is relevant to a certain well-poised basic hypergeometric series.

\Keywords{Linked partition ideals, overpartitions, $4$-regular partitions, generating functions, Andrews--Gordon type series, Rogers--Ramanujan type identities.}

\MSC{11P84, 05A17.}
\end{abstract}

\section{Introduction}

In the theory of basic hypergeometric series and integer partitions, the two Rogers--Ramanujan identities play an irreplaceable role. From an analytic perspective, they are
\begin{align}
	\prod_{n\ge 0}\frac{1}{(1-q^{5n+1})(1-q^{5n+4})} &= \sum_{n\ge 0}\frac{q^{n^2}}{(q;q)_n},\label{eq:RR1}\\
	\prod_{n\ge 0}\frac{1}{(1-q^{5n+2})(1-q^{5n+3})} &= \sum_{n\ge 0}\frac{q^{n^2+n}}{(q;q)_n}.\label{eq:RR2}
\end{align}
Here and throughout we adopt the $q$-Pochhammer symbols for $n\in\mathbb{N}\cup\{\infty\}$,
\begin{align*}
	(A;q)_n:=\prod_{k=0}^{n-1} (1-A q^k)
\end{align*}
and
\begin{align*}
	(A_1,\ldots,A_\ell;q)_n := (A_1;q)_n\cdots (A_\ell;q)_n.
\end{align*}
In terms of integer partitions, the two identities may be interpreted as follows.
\begin{ctheorem}{RR}
	\textup{(i)}~The number of partitions of $n$ into parts congruent to $\pm 1$ modulo $5$ is the same as the number of partitions of $n$ such that every two consecutive parts have difference at least $2$.
	
	\textup{(ii)}~The number of partitions of $n$ into parts congruent to $\pm 2$ modulo $5$ is the same as the number of partitions of $n$ such that every two consecutive parts have difference at least $2$ and that the smallest part is greater than $1$.
\end{ctheorem}

Since the first discovery of \eqref{eq:RR1} and \eqref{eq:RR2} by Rogers \cite{Rog1894}, which were overlooked for nearly two decades until Ramanujan \cite{Ram1914,Ram1919} and Schur \cite{Sch1917} independently reproduced them, there have been numerous generalizations and analogs of the Rogers--Ramanujan identities, among which Gordon's extension \cite{Gor1961} to higher moduli is of substantial significance. Subsequently, Andrews \cite{And1974} established the analytic counterpart of Gordon's result, namely, for $1\le i\le k$ and $k\ge 2$,
\begin{equation}
	\prod_{\substack{n\ge 1\\n\not\equiv 0,\pm i\, (\operatorname{mod}\, 2k+1)}}\frac{1}{1-q^n}=\sum_{n_1,\ldots,n_{k-1}\ge 0}\frac{q^{N_1^2+N_2^2+\cdots+ N_{k-1}^2+N_i+N_{i+1}+\cdots +N_{k-1}}}{(q;q)_{n_1}(q;q)_{n_2}\cdots (q;q)_{n_{k-1}}},
\end{equation}
where $N_j=n_j+n_{j+1}+\cdots +n_{k-1}$. Summarizing from the right-hand side of the above, we are led to a family of $q$-multi-summations now known as the \emph{series of Andrews--Gordon type}:
\begin{equation}\label{eq:And-conj-0}
	\sum_{n_1,\ldots,n_r\ge 0}\frac{(-1)^{L_1(n_1,\ldots,n_r)}q^{Q(n_1,\ldots,n_r)+L_2(n_1,\ldots,n_r)}}{(q^{A_1};q^{A_1})_{n_1}\cdots (q^{A_r};q^{A_r})_{n_r}},
\end{equation}
in which $L_1$ and $L_2$ are linear forms and $Q$ is a quadratic form in the indices $n_1,\ldots,n_r$. It is usually expected to construct Andrews--Gordon type series or summations of alike shapes so that they are equal to a certain infinite product. Along this line, our first object is the following trivariate relation.

\begin{theorem}\label{th:quadruple-ind}
	We have
	\begin{align}\label{eq:quadruple-ind}
		&(-xq;q^2)_\infty (-yq^2;q^4)_\infty\notag\\
		&\quad= \sum_{n_1,n_2,n_3,n_4\ge 0}\frac{x^{n_1+n_2+2n_4}y^{n_2+n_3}q^{n_1+3n_2+2n_3+4n_4}(1+x^2yq^{6+8(n_1+n_2+n_3+n_4)})}{(q^2;q^2)_{n_1}(q^2;q^2)_{n_2}(q^4;q^4)_{n_3}(q^4;q^4)_{n_4}}\notag\\
		&\quad\quad\times q^{4\binom{n_1}{2}+6\binom{n_2}{2}+4\binom{n_3}{2}+8\binom{n_4}{2}+4n_1n_2+4n_1n_3+4n_1n_4+4n_2n_3+4n_2n_4+4n_3n_4}.
	\end{align}
\end{theorem}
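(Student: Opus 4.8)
The plan is to recognize that both sides are pinned down by the same pair of $q$-difference equations, and to reduce everything to verifying those equations for the quadruple sum. First I would record two preliminary simplifications. On the product side, Euler's identity $(-z;Q)_\infty=\sum_{n\ge0}Q^{\binom n2}z^n/(Q;Q)_n$ (applied with $Q=q^2,\ z=xq$ and with $Q=q^4,\ z=yq^2$) turns the left-hand side into the clean double sum
\[
(-xq;q^2)_\infty(-yq^2;q^4)_\infty=\sum_{i,j\ge0}\frac{q^{i^2+2j^2}x^iy^j}{(q^2;q^2)_i(q^4;q^4)_j}.
\]
On the series side, expanding the binomial coefficients shows that the unwieldy quadratic form collapses: writing $N=n_1+n_2+n_3+n_4$, the diagonal-plus-cross terms reduce to $2N^2+n_2^2+2n_4^2$, and after absorbing the linear contributions the exponent of $q$ attached to the ``$1$'' inside the factor $(1+x^2yq^{6+8N})$ is exactly $2N^2+n_2^2+2n_4^2-n_1$. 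This exposes the homogeneous core $q^{2N^2}$ and isolates the roles of the auxiliary indices $n_2,n_4$.

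Next I would set up the uniqueness engine. Let $f(x,y)$ be the left-hand side. From $(-xq;q^2)_\infty=(1+xq)(-xq^3;q^2)_\infty$ and $(-yq^2;q^4)_\infty=(1+yq^2)(-yq^6;q^4)_\infty$ one reads off the two functional equations $f(x,y)=(1+xq)f(xq^2,y)$ and $f(x,y)=(1+yq^2)f(x,yq^4)$ together with $f(0,0)=1$. Comparing coefficients of $x^iy^j$, these equations force $c_{i,j}=\frac{q^{2i-1}}{1-q^{2i}}c_{i-1,j}$ and $c_{0,j}=\frac{q^{4j-2}}{1-q^{4j}}c_{0,j-1}$, so they determine the power series uniquely. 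Hence it suffices to prove that the quadruple sum $F(x,y)$ on the right obeys the same two functional equations, the normalization $F(0,0)=1$ being immediate since only the all-zero index term survives at lowest order.

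The crux is verifying, say, $F(x,y)=(1+xq)F(xq^2,y)$, and here the factor $1+x^2yq^{6+8N}$ is the real obstacle. Rewriting it as $1-(-x^2yq^6)q^{8N}$ reveals it as a very-well-poised factor of base $q^4$ indexed by the total $N$, which is precisely why a naive reindexing of the second summand into the first fails: the induced index shifts cannot simultaneously match the $x$-degree, the $y$-degree, and the $q$-power, as the three linear constraints quickly show. The plan is therefore to peel off a single summation variable, recognize the resulting inner sum as a terminating very-well-poised basic hypergeometric series, and evaluate or transform it by a classical ${}_6\phi_5$-type summation. This is exactly the ``well-poised basic hypergeometric series'' flagged in the abstract, and it is where essentially all of the work lies. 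A useful diagnostic pointing the same way is that, among the four indices, only $n_3$ has quadratic coefficient $2$ matching the base-$q^4$ Euler identity (so that summing it alone yields a clean product), whereas $n_1,n_2,n_4$ carry genuinely Rogers--Ramanujan-like doubled Gaussian terms; one therefore cannot collapse the sum by elementary summations, and a well-poised evaluation is forced.

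As an alternative route I would keep in reserve a Bailey-chain construction: seek a Bailey pair relative to the mixed bases $q^2$ and $q^4$ dictated by the four Pochhammer symbols, whose iteration reproduces the quadruple sum, with the very-well-poised factor entering through the ${}_6\phi_5$ seed of Bailey's lemma and the $q^{2N^2}$ core reflecting the quadratic growth along the chain. Either way, the single hard ingredient is the well-poised summation that converts the coupled second summand into the single shift factor $1+xq$, and symmetrically $1+yq^2$.
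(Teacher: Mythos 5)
Your preliminary reductions are all correct: the expansion of the product side by Euler's second sum, the collapse of the quadratic form to $2N^2+n_2^2+2n_4^2-n_1$ with $N=n_1+n_2+n_3+n_4$, and the uniqueness engine (the two functional equations $f(x,y)=(1+xq)f(xq^2,y)$, $f(x,y)=(1+yq^2)f(x,yq^4)$ together with $f(0,0)=1$ pin down the coefficients $c_{i,j}$). However, the proof has a genuine gap exactly where you say ``essentially all of the work lies'': you never verify that the quadruple sum satisfies either functional equation. The proposal to ``peel off a single summation variable, recognize the resulting inner sum as a terminating very-well-poised series, and evaluate it by a ${}_6\phi_5$-type summation'' is not carried out, and there is no evidence it can be: after fixing three of the four indices, the factor $(1+x^2yq^{6+8N})$ and the fully coupled core $q^{2N^2}$ tie the peeled variable to \emph{all} the others, so the inner sum does not visibly terminate or become well poised, and the cross-index recombination needed for $F(x,y)-F(xq^2,y)=xqF(xq^2,y)$ is never exhibited. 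Your own diagnostic --- that $n_1,n_2,n_4$ carry doubled Gaussian terms so elementary summations cannot collapse the sum --- is precisely the obstruction, and the Bailey-chain alternative is likewise only named, not constructed.

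The missing idea, which is how the paper resolves this, is a \emph{deformation} of the identity rather than a direct attack on the coupled sum. The paper first proves the companion identity \eqref{eq:quadruple-ind-new}, whose quadratic form $2\binom{n_1}{2}+2n_1n_2+4n_1n_3+4n_3n_4$ is sparse: there Euler's sums \eqref{eq:Euler-1} and the $q$-binomial theorem \eqref{eq:q-Bin-Ser} \emph{do} collapse $n_4$, $n_2$, $n_3$, reducing everything to a single sum in $n_1$, which is evaluated by the trivariate identity \eqref{eq:tri-single}; that identity is where the well-poised series genuinely enters, via Andrews' $H_{1,1}$ and $H_{1,2}$ functions and their $q$-difference equation. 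The target identity \eqref{eq:quadruple-ind} is then obtained by applying the $q$-Borel-type operator $\cB$, which inserts $q^{2\binom{m}{2}+4\binom{n}{2}}$ into the coefficient of $x^my^n$: this operator sends the sparse quadruple sum to the coupled one, and (by Euler's two sums \eqref{eq:Euler-1}, \eqref{eq:Euler-2}) sends $1/((xq;q^2)_\infty(yq^2;q^4)_\infty)$ to $(-xq;q^2)_\infty(-yq^2;q^4)_\infty$. In short, your framework is sound but reduces the theorem to a claim just as hard as the theorem itself; to complete a proof along these or any lines you need a mechanism such as this Borel transfer (or an honest verification of the $q$-difference equations at the level of a companion sum), not a pointer to a classical summation formula.
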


This identity is mainly motivated by a recent work of Andrews \cite{And2022} on $4$-regular partitions into distinct parts; here a partition is \emph{$k$-regular} if no part is divisible by $k$. Owing to a theorem of Glaisher \cite{Gla1883}, such partitions are also equinumerous with partitions with no part appearing $k$ or more times and this definition is often used in representation theory \cite[p.~251]{JK1981}. In \cite{And2022}, Andrews connected $4$-regular partitions into distinct parts with overpartitions that were introduced by Corteel and Lovejoy \cite{CL2004}. Recall that an \emph{overpartition} of $n$ is a partition of $n$ where the first occurrence of each distinct part may be overlined. For example, $4$ has fourteen overpartitions:
\begin{gather*}
	4,\ \overline{4},\ 3+1,\ \overline{3}+1,\ 3+\overline{1},\ \overline{3}+\overline{1},\ 2+2,\ \overline{2}+2,\\
	2+1+1,\ \overline{2}+1+1,\ 2+\overline{1}+1,\ \overline{2}+\overline{1}+1,\ 1+1+1+1,\ \overline{1}+1+1+1.
\end{gather*}
Now consider the set $\mA_{\{\overline{1}\}}^\veebar$ of overpartitions such that
\begin{enumerate}[label={\textup{(\arabic*)~}},leftmargin=*,labelsep=0cm,align=left]
	\item Only odd parts {\bfseries\boldmath larger than $1$} may be overlined;
	
	\item The difference between any two parts is $\ge 4$ and the inequality is strict if the larger one is overlined or divisible by $4$ with {\bfseries\boldmath the exception that $\overline{5}$ and $1$ may simultaneously appear as parts}.
\end{enumerate}
Andrews proved the following two results.

\begin{ctheorem}{A1}\label{th:A1}
	Let $A_1(n,m)$ count the number of overpartitions of $n$ in $\mA_{\{\overline{1}\}}^\veebar$ into $m$ parts with overlined parts and parts divisible by $4$ counted with weight $2$. Further, let $B_1(n,m)$ count the number of partitions into $m$ distinct parts none divisible by $4$. Then
	\begin{align*}
		A_1(n,m)=B_1(n,m).
	\end{align*}
\end{ctheorem}

\begin{ctheorem}{A2}\label{th:A2}
	Let $A_2(n,m)$ count the number of overpartitions of $n$ in $\mA_{\{\overline{1}\}}^\veebar$ into $m$ parts with overlined parts counted with weight $3$ and even parts
	counted with weight $2$. Further, let $B_2(n,m)$ count the number of partitions into $m$ odd parts none appearing more than three times. Then
	\begin{align*}
		A_2(n,m)=B_2(n,m).
	\end{align*}
\end{ctheorem}

Now note that
\begin{align*}
	\sum_{m,n\ge 0}B_1(n,m)x^m q^n &= \prod_{\substack{k\ge 1\\k\not\equiv 0 \, (\operatorname{mod}\, 4)}} (1+xq^{k})\\
	& = (-xq;q^2)_\infty (-xq^2;q^4)_\infty.
\end{align*}
Meanwhile,
\begin{align*}
	\sum_{m,n\ge 0}B_2(n,m)x^m q^n &= \prod_{k\ge 1} (1+xq^{2k-1}+x^2q^{2(2k-1)}+x^3q^{3(2k-1)})\\
	& = (-xq;q^2)_\infty (-x^2q^2;q^4)_\infty.
\end{align*}
Hence the two infinite products are special cases of the left-hand side of \eqref{eq:quadruple-ind}. Naturally, it is then expected that the right-hand side of \eqref{eq:quadruple-ind} should characterize the overpartition set $\mA_{\{\overline{1}\}}^\veebar$.

For this purpose, we first loosen the conditions for $\mA_{\{\overline{1}\}}^\veebar$.

\begin{definition}
	Let $\mA$ denote the set of overpartitions such that
	\begin{enumerate}[label={\textup{(\arabic*)~}},leftmargin=*,labelsep=0cm,align=left]
		\item Only odd parts may be overlined;
		
		\item The difference between any two parts is $\ge 4$ and the inequality is strict if the larger one is overlined or divisible by $4$.
	\end{enumerate}
\end{definition}

Our next object is to establish quinvariate generating function formulas for the above overpartitions, possibly with extra restrictions on the smallest part, such as
$$\sum_{\lambda\in \mA} x^{\sharp(\lambda)}y_1^{\sharp_{2,4}(\lambda)}y_2^{\sharp_{0,4}(\lambda)}z^{\sfO(\lambda)}q^{|\lambda|}.$$
Here we adopt the notations that for any (over)partition $\lambda$, $|\lambda|$ and $\sharp(\lambda)$ are the sum of all parts (namely, the \emph{size}) and the number of parts (namely, the \emph{length}) in $\lambda$, respectively, and $\sharp_{a,M}(\lambda)$ is the number of parts in $\lambda$ that are congruent to $a$ modulo $M$. Meanwhile, we denote by $\sfO(\lambda)$ the number of overlined parts in an overpartition $\lambda$.

For the sake of brevity, we postpone the presentation of these generating functions until Theorem \ref{th:quin-gf}. However, we state here that Theorems \ref{th:A1} and \ref{th:A2} may be unified with an additional parameter introduced.

\begin{theorem}\label{th:And-tri-ext}
	Let $A(n,m,\ell)$ count the number of overpartitions $\lambda$ of $n$ in $\mA_{\{\overline{1}\}}^\veebar$ such that $\sharp_{1,2}(\lambda)+2\sharp_{0,4}(\lambda)=m$ and $\sharp_{2,4}(\lambda)+\sfO(\lambda)=\ell$. Further, let $B(n,m,\ell)$ count the number of $4$-regular partitions into distinct parts with $m$ odd parts and $\ell$ even parts. Then
	\begin{align}
		A(n,m,\ell) = B(n,m,\ell).
	\end{align}
\end{theorem}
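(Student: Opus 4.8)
The plan is to convert the combinatorial identity $A(n,m,\ell)=B(n,m,\ell)$ into an equality of three-variable generating functions and to recognize the two sides as the two sides of Theorem~\ref{th:quadruple-ind}. I would begin with the $B$-side, which is immediate. A $4$-regular partition into distinct parts is exactly a partition into distinct parts none of which is divisible by $4$; its odd parts range over distinct odd positive integers (each marked by $x$), and its even parts are distinct integers $\equiv 2\pmod 4$ (each marked by $y$). Hence
\begin{align*}
	\sum_{n,m,\ell\ge 0} B(n,m,\ell)\,x^m y^\ell q^n
	&= \prod_{j\ge 0}(1+xq^{2j+1})\prod_{j\ge 0}(1+yq^{4j+2})\\
	&= (-xq;q^2)_\infty\,(-yq^2;q^4)_\infty ,
\end{align*}
which is precisely the left-hand side of \eqref{eq:quadruple-ind}.

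The substance of the argument is then to show that
\begin{align*}
	\sum_{\lambda\in\mA_{\{\overline{1}\}}^\veebar} x^{\sharp_{1,2}(\lambda)+2\sharp_{0,4}(\lambda)}\,y^{\sharp_{2,4}(\lambda)+\sfO(\lambda)}\,q^{|\lambda|}
\end{align*}
equals the right-hand side of \eqref{eq:quadruple-ind}. I would split each overpartition in $\mA_{\{\overline{1}\}}^\veebar$ into its four species of parts: non-overlined odd parts, overlined odd parts, parts $\equiv 2\pmod 4$, and parts $\equiv 0\pmod 4$, counted by $n_1,n_2,n_3,n_4$ respectively. A single part of each species contributes to the pair $(\sharp_{1,2}+2\sharp_{0,4},\ \sharp_{2,4}+\sfO)$ the exponent pairs $(1,0)$, $(1,1)$, $(0,1)$, $(2,0)$, which reproduces exactly the prefactor $x^{n_1+n_2+2n_4}y^{n_2+n_3}$. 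For the $q$-weight I would encode the difference conditions (gaps $\ge 4$, strict when the larger part is overlined or $\equiv 0\pmod 4$) by peeling off a minimal staircase configuration for each species and then letting each species grow independently. The minimal staircases $1,5,9,\dots$; $3,9,15,\dots$; $2,6,10,\dots$; and $4,12,20,\dots$ account for the linear exponent $q^{n_1+3n_2+2n_3+4n_4}$ together with the pure quadratic terms $4\binom{n_1}{2}+6\binom{n_2}{2}+4\binom{n_3}{2}+8\binom{n_4}{2}$, the independent growth supplies the denominator $(q^2;q^2)_{n_1}(q^2;q^2)_{n_2}(q^4;q^4)_{n_3}(q^4;q^4)_{n_4}$, and the cross terms $4n_in_j$ come from the shifts forced when parts of different species are interleaved under the strict-versus-weak gap rules. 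I would organize this computation by specializing the quinvariate generating function of Theorem~\ref{th:quin-gf} to the three variables used here.

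The exceptional clause permitting $\overline 5$ and $1$ to coexist is what produces the two-term factor $1+x^2yq^{6+8(n_1+n_2+n_3+n_4)}$: the summand carrying the $1$ enumerates configurations in which the strict gap between the smallest overlined part and a possible $1$ is respected, while the $x^2yq^{6+\cdots}$ term inserts the extra pair $\{\overline5,1\}$, whose size $1+5=6$ and exponent pair $x^{2}y$ (a $1$ contributing $x$ and an $\overline 5$ contributing $xy$) match the factor, with the shift $8(n_1+n_2+n_3+n_4)$ recording the upward displacement of all other parts. Combining the two generating-function evaluations with Theorem~\ref{th:quadruple-ind} gives
\begin{align*}
	\sum_{n,m,\ell\ge 0} A(n,m,\ell)\,x^m y^\ell q^n
	= \sum_{n,m,\ell\ge 0} B(n,m,\ell)\,x^m y^\ell q^n ,
\end{align*}
and comparing coefficients of $x^m y^\ell q^n$ yields $A(n,m,\ell)=B(n,m,\ell)$.

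I expect the main obstacle to be the derivation of the $A$-side generating function, and within it the two delicate points: correctly extracting all cross terms $4n_in_j$ with the right coefficients from the strict gap rules governing overlined parts and parts divisible by $4$, and rigorously justifying the binomial factor $1+x^2yq^{6+8(n_1+n_2+n_3+n_4)}$ as the precise analytic footprint of the single exceptional pair $\{\overline5,1\}$. By contrast, the $B$-side evaluation and the exponent-pair bookkeeping that fixes the prefactor are routine; it is the passage from the linked-partition-ideal structure of $\mA_{\{\overline{1}\}}^\veebar$ to the explicit quadruple sum that carries the real difficulty.
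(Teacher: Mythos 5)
Your proposal is correct and takes essentially the same route as the paper: the paper likewise reduces the $A$-side to the generating function \eqref{eq:gf-A-1ov} of Theorem \ref{th:quin-gf} under the specialization $(y_1,y_2,z)\mapsto(x^{-1}y,x,y)$, handles the exceptional pair $\{\overline{5},1\}$ by exactly your insertion/removal-and-shift-by-$8$ argument (which is what produces the factor $1+x^2yq^{6+8(n_1+n_2+n_3+n_4)}$, the paper also noting that $\overline{9}$ cannot occur in that case so the shifted partition indeed lies in $\mA_{\{\overline{1}\}}$), and then invokes Theorem \ref{th:quadruple-ind} to identify the result with the product $(-xq;q^2)_\infty(-yq^2;q^4)_\infty$ generating $B(n,m,\ell)$.
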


\section{A trivariate identity}

To establish Theorem \ref{th:quadruple-ind}, we require the following trivariate relation, which is of independent interest.

\begin{theorem}
	We have
	\begin{align}\label{eq:tri-single}
		\frac{(-x;q)_\infty (xy;q)_\infty}{(x^2yq^2;q^2)_\infty} = \sum_{n\ge 0} \frac{x^n q^{\binom{n}{2}}(1-x^2y^2q^{4n})(xy;q)_n (y;q^2)_n}{(q;q)_n(x^2yq^2;q^2)_n}.
	\end{align}
\end{theorem}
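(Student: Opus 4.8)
The plan is to regard both sides of \eqref{eq:tri-single} as formal power series in $x$ over the field $\mathbb{Q}(q,y)$, to show that each satisfies one and the same first-order $q$-difference equation in $x$ together with the same value at $x=0$, and then to invoke uniqueness. For the left-hand side $F(x)$ the equation drops out from elementary product manipulations: since $(-xq;q)_\infty=(-x;q)_\infty/(1+x)$, $(xyq;q)_\infty=(xy;q)_\infty/(1-xy)$, and $(x^2yq^4;q^2)_\infty=(x^2yq^2;q^2)_\infty/(1-x^2yq^2)$, replacing $x$ by $xq$ throughout gives
\[
(1+x)(1-xy)\,F(xq)=(1-x^2yq^2)\,F(x),\qquad F(0)=1.
\]
Expanding $F(x)=\sum_N c_N x^N$ and reading off the coefficient of $x^N$ shows that $c_N$ is determined by $c_{N-1},c_{N-2}$ with leading factor $q^N-1\neq0$ for $N\ge 1$; hence this data pins down the series uniquely.

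Next I would write the right-hand side as $R(x)=\sum_{n\ge 0}t_n(x)$ with $t_n(x)$ the displayed summand. Only the $n=0$ term survives at $x=0$ and equals $1$, so $R(0)=1$, and since $t_n(x)\in x^n\cdot\mathbb{Q}(q,y)[[x]]$ the sum is a genuine power series in $x$. The whole problem is then reduced to verifying that $R$ obeys the same equation,
\[
(1+x)(1-xy)\,R(xq)-(1-x^2yq^2)\,R(x)=0,
\]
which I would attack termwise by producing a telescoping certificate $u_n(x)$, vanishing as $n\to\infty$, with
\[
(1+x)(1-xy)\,t_n(xq)-(1-x^2yq^2)\,t_n(x)=u_n(x)-u_{n+1}(x),
\]
so that the sum collapses to $u_0(x)$, which one checks to be $0$.

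The hard part, and the point where the well-poised structure is indispensable, is writing down and verifying this certificate. The factor $1-x^2y^2q^{4n}$ is exactly the very-well-poised factor $1-a\,(q^2)^{2n}$ in base $q^2$ with $a=x^2y^2$, and the pair $(y;q^2)_n$, $(x^2yq^2;q^2)_n$ matches the choice $b=y$ (so that $aq^2/b=x^2yq^2$); thus $R$ carries a base-$q^2$ very-well-poised skeleton decorated by the base-$q$ factors $x^n q^{\binom{n}{2}}(xy;q)_n/(q;q)_n$. Reconciling the two bases $q$ and $q^2$ in a single certificate is the genuine obstacle, and it is precisely here that a contiguous relation for the associated ${}_6\phi_5$-type series (equivalently, the Bailey pair underlying the very-well-poised ${}_6\phi_5$ summation) supplies the needed three-term identity and hence guarantees the existence of $u_n$. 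Once the certificate is confirmed, the $q$-difference equation for $R$ follows, and comparison with $F$ through the uniqueness established above completes the proof.
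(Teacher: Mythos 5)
Your reduction of the identity to a $q$-difference equation is sound as far as it goes: the functional equation $(1+x)(1-xy)F(xq)=(1-x^2yq^2)F(x)$, $F(0)=1$, for the left-hand side is derived correctly, and the uniqueness argument (the coefficient of $x^N$ carries the nonvanishing factor $q^N-1$) works over $\mathbb{Q}(q,y)$. The gap is that the half of the argument carrying all the content --- showing that the right-hand side satisfies the same equation --- is never carried out. You reduce it to the existence of a telescoping certificate $u_n(x)$ with $(1+x)(1-xy)\,t_n(xq)-(1-x^2yq^2)\,t_n(x)=u_n(x)-u_{n+1}(x)$, and then assert that a contiguous relation for an associated ${}_6\phi_5$-type series ``guarantees'' that such a $u_n$ exists. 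No such relation is stated, no $u_n$ is written down, and nothing is verified. This is exactly the point you yourself flag as ``the genuine obstacle'': under $x\mapsto xq$ the very-well-poised factor $1-x^2y^2q^{4n}$ becomes $1-x^2y^2q^{4n+2}$, which no longer fits the base-$q^2$ very-well-poised pattern, so the shifted summand leaves the structural family you invoke, and it is not at all clear that a certificate of the proposed shape exists (the terms are $q$-hypergeometric, so Zeilberger-type machinery might eventually produce some recurrence, but possibly of higher order and not of the asserted two-term telescoping form). As it stands, the proof has a named placeholder where its core computation should be.

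For comparison, the paper resolves precisely this two-base difficulty by specialization rather than by a certificate: it defines $h(a_1,a_2;x,q)=\lim_{a_3\to\infty}H_{1,2}(a_1,a_2,a_3;x,q)$ in Andrews' family of well-poised series and substitutes $(x,a_1,a_2)\mapsto(xy,y^{1/2},-y^{1/2})$; the choice $a_1=y^{1/2}$, $a_2=-y^{1/2}$ is what merges pairs of base-$q$ Pochhammer factors into the base-$q^2$ factors $(y;q^2)_n$ and $(x^2yq^2;q^2)_n$ appearing in the sum. The identity then follows from two results already proved in Andrews' 1968 paper: the infinite-product evaluation of $H_{1,1}$ and the $q$-difference equation linking $H_{1,2}$ to $H_{1,1}$. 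So the well-poised $q$-difference theory you gesture at is indeed the right tool, but it enters through explicit, citable identities, not through an unproduced telescoping certificate. To rescue your approach you must either exhibit $u_n(x)$ explicitly and check the telescoping identity (together with $u_0(x)=0$ and $u_n(x)\to 0$), or replace that step by the specialization argument just described.
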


For its proof, we recall that Andrews introduced in \cite{And1968} a family of $q$-series arising from a certain well-poised basic hypergeometric series:
\begin{align*}
	&H_{k,i}(a_1,a_2,a_3;x,q)\\
	&\quad:= \frac{(\frac{xq}{a_1},\frac{xq}{a_2},\frac{xq}{a_3};q)_\infty}{(xq;q)_\infty}\sum_{n\ge 0}\frac{\big(\frac{x^k}{a_1a_2a_3}\big)^nq^{(k-1)n^2+(2-i)n}(1-x^iq^{2ni})(x,a_1,a_2,a_3;q)_n}{(1-x)(q,\frac{xq}{a_1},\frac{xq}{a_2},\frac{xq}{a_3};q)_n}.
\end{align*}
From \cite[p.~439, Eq.~(3.7)]{And1968},
\begin{align}\label{eq:And-H11}
	H_{1,1}(a_1,a_2,a_3;x,q) = \frac{(\frac{xq}{a_1a_2},\frac{xq}{a_2a_3},\frac{xq}{a_3a_1};q)_\infty}{(\frac{xq}{a_1a_2a_3};q)_\infty}.
\end{align}
Also, \cite[p.~439, Eq.~(3.4)]{And1968} tells us that the following $q$-difference equation is valid:
\begin{align}\label{eq:And-H12}
	H_{1,2}(a_1,a_2,a_3;xq,q) &= H_{1,1}(a_1,a_2,a_3;x,q)\notag\\
	&\quad + xq(\sigma_1-xq\sigma_3)H_{1,1}(a_1,a_2,a_3;xq,q),
\end{align}
where $\sigma_j=\sigma_j(a_1^{-1},a_2^{-1},a_3^{-1})$ is the $j$-th elementary symmetric function of $a_1^{-1}$, $a_2^{-1}$ and $a_3^{-1}$.

\begin{proof}
	Define
	\begin{align*}
		h(a_1,a_2;x,q):=\lim_{a_3\to \infty} H_{1,2}(a_1,a_2,a_3;x,q).
	\end{align*}
	Then
	\begin{align*}
		h(a_1,a_2;x,q)&=\frac{(\frac{xq}{a_1},\frac{xq}{a_2};q)_\infty}{(xq;q)_\infty}\sum_{n\ge 0}\frac{\big(\frac{x}{a_1a_2}\big)^n(-1)^nq^{\binom{n}{2}}(1-x^2q^{4n})(x,a_1,a_2;q)_n}{(1-x)(q,\frac{xq}{a_1},\frac{xq}{a_2};q)_n}.
	\end{align*}
	Now taking $(x,a_1,a_2)\mapsto (xy,y^{1/2},-y^{1/2})$ gives
	\begin{align*}
		\sum_{n\ge 0} \frac{x^n q^{\binom{n}{2}}(1-x^2y^2q^{4n})(xy;q)_n (y;q^2)_n}{(q;q)_n(x^2yq^2;q^2)_n} = \frac{(xy;q)_\infty}{(x^2yq^2;q^2)_\infty} h(y^{1/2},-y^{1/2};xy,q).
	\end{align*}
	Meanwhile, it is known from \eqref{eq:And-H11} and \eqref{eq:And-H12} that
	\begin{align*}
		H_{1,2}(a_1,a_2,a_3;x,q)&=\frac{(\frac{x}{a_1a_2},\frac{x}{a_2a_3},\frac{x}{a_3a_1};q)_\infty}{(\frac{x}{a_1a_2a_3};q)_\infty}\\
		&\quad+x\left(\frac{1}{a_1}+\frac{1}{a_2}+\frac{1}{a_3}-\frac{x}{a_1a_2a_3}\right)\frac{(\frac{xq}{a_1a_2},\frac{xq}{a_2a_3},\frac{xq}{a_3a_1};q)_\infty}{(\frac{xq}{a_1a_2a_3};q)_\infty}.
	\end{align*}
	We still let $a_3\to\infty$ and then take $(x,a_1,a_2)\mapsto (xy,y^{1/2},-y^{1/2})$. Thus,
	\begin{align*}
		h(y^{1/2},-y^{1/2};xy,q) = (-x;q)_\infty.
	\end{align*}
	Substituting the above into the previous relation gives the required identity.
\end{proof}

\section{Proof of Theorem \ref{th:quadruple-ind}}

We start with a list of well-known relations for basic hypergeometric series.
\begin{itemize}[leftmargin=*,align=left]
	\renewcommand{\labelitemi}{$\triangleright$}
	\item Euler's first sum \cite[Eq.~(2.2.5)]{And1976}:
	\begin{align}\label{eq:Euler-1}
		\sum_{n\ge 0}\frac{z^n}{(q;q)_n} = \frac{1}{(z;q)_\infty}.
	\end{align}

	\item Euler's second sum \cite[Eq.~(2.2.6)]{And1976}:
	\begin{align}\label{eq:Euler-2}
		\sum_{n\ge 0}\frac{z^nq^{\binom{n}{2}}}{(q;q)_n} = (-z;q)_\infty.
	\end{align}
	
	\item The $q$-binomial theorem \cite[Eq.~(2.2.1)]{And1976}:
	\begin{align}\label{eq:q-Bin-Ser}
		\sum_{n\ge 0}\frac{(a;q)_n z^n}{(q;q)_n} = \frac{(az;q)_\infty}{(z;q)_\infty}.
	\end{align}
\end{itemize}

Now let us establish an equivalent identity of \eqref{eq:quadruple-ind}.

\begin{theorem}\label{th:quadruple-ind-new}
	We have
	\begin{align}\label{eq:quadruple-ind-new}
		&\frac{1}{(xq;q^2)_\infty (yq^2;q^4)_\infty}\notag\\
		&\quad= \sum_{n_1,n_2,n_3,n_4\ge 0}\frac{x^{n_1+n_2+2n_4}y^{n_2+n_3}q^{n_1+3n_2+2n_3+2n_4}(1+x^2yq^{4+4(n_1+n_3)})}{(q^2;q^2)_{n_1}(q^2;q^2)_{n_2}(q^4;q^4)_{n_3}(q^4;q^4)_{n_4}}\notag\\
		&\quad\quad\times q^{2\binom{n_1}{2}+2n_1n_2+4n_1n_3+4n_3n_4}.
	\end{align}
\end{theorem}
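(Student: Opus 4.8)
The plan is to collapse the quadruple sum on the right of \eqref{eq:quadruple-ind-new} one index at a time: I would sum $n_4$ and $n_2$ by Euler's first sum \eqref{eq:Euler-1}, then $n_3$ by the $q$-binomial theorem \eqref{eq:q-Bin-Ser}, and finally recognize the residual single sum over $n_1$ as the trivariate identity \eqref{eq:tri-single} after a suitable rescaling of the variables. The computation should terminate in a short product of infinite $q$-Pochhammer symbols that simplifies to the left-hand side.

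First I would sum over $n_4$. Since $n_4$ enters only through $x^{2n_4}$, the linear exponent $2n_4$, the cross term $4n_3n_4$, and $1/(q^4;q^4)_{n_4}$, Euler's first sum \eqref{eq:Euler-1} with base $q^4$ gives $\sum_{n_4\ge 0}(x^2q^{2+4n_3})^{n_4}/(q^4;q^4)_{n_4}=1/(x^2q^{2+4n_3};q^4)_\infty$. Likewise $n_2$ enters only through $(xyq^3)^{n_2}$, the cross term $2n_1n_2$, and $1/(q^2;q^2)_{n_2}$, so Euler's first sum with base $q^2$ yields $1/(xyq^{3+2n_1};q^2)_\infty$. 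Writing $1/(x^2q^{2+4n_3};q^4)_\infty=(x^2q^2;q^4)_{n_3}/(x^2q^2;q^4)_\infty$ and $1/(xyq^{3+2n_1};q^2)_\infty=(xyq^3;q^2)_{n_1}/(xyq^3;q^2)_\infty$ pulls two index-free infinite products out front and leaves a double sum over $n_1,n_3$ whose $n_3$-part carries the finite factor $(x^2q^2;q^4)_{n_3}$ in the numerator. I would then sum over $n_3$ with the $q$-binomial theorem \eqref{eq:q-Bin-Ser} (base $q^4$, parameter $x^2q^2$); here the factor $1+x^2yq^{4+4(n_1+n_3)}$ splits the sum into two applications, and the decisive step is to recombine the two resulting products. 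After clearing the common tail this collapse produces exactly the factor $1-x^2y^2q^{6+8n_1}$, together with the prefactor $(x^2yq^8;q^4)_\infty/(yq^2;q^4)_\infty$ once the $n_1$-dependent tails are extracted.

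What remains is a single sum over $n_1$ of the shape $\sum_{n_1\ge 0}x^{n_1}q^{n_1+2\binom{n_1}{2}}(1-x^2y^2q^{6+8n_1})(xyq^3;q^2)_{n_1}(yq^2;q^4)_{n_1}/\big((q^2;q^2)_{n_1}(x^2yq^8;q^4)_{n_1}\big)$, which is precisely \eqref{eq:tri-single} under the substitution $(q,x,y)\mapsto(q^2,xq,yq^2)$: one checks that $(xy;q)_n\mapsto(xyq^3;q^2)_{n_1}$, $(y;q^2)_n\mapsto(yq^2;q^4)_{n_1}$, $(x^2yq^2;q^2)_n\mapsto(x^2yq^8;q^4)_{n_1}$, and $1-x^2y^2q^{4n}\mapsto 1-x^2y^2q^{6+8n_1}$, while the monomial $x^nq^{\binom{n}{2}}$ becomes $x^{n_1}q^{n_1+2\binom{n_1}{2}}$. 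Hence this sum equals $(-xq;q^2)_\infty(xyq^3;q^2)_\infty/(x^2yq^8;q^4)_\infty$. Multiplying back the four prefactors, the symbols $(xyq^3;q^2)_\infty$ and $(x^2yq^8;q^4)_\infty$ cancel, leaving $(-xq;q^2)_\infty/\big((x^2q^2;q^4)_\infty(yq^2;q^4)_\infty\big)$; finally the classical factorization $(xq;q^2)_\infty(-xq;q^2)_\infty=(x^2q^2;q^4)_\infty$ reduces this to $1/\big((xq;q^2)_\infty(yq^2;q^4)_\infty\big)$, the left-hand side of \eqref{eq:quadruple-ind-new}. The main obstacle I anticipate is the recombination in the $n_3$-sum, namely verifying that the two $q$-binomial evaluations merge into the single factor $1-x^2y^2q^{6+8n_1}$, and, relatedly, pinning down the exact rescaling $(q,x,y)\mapsto(q^2,xq,yq^2)$ so that all four Pochhammer symbols and the balancing factor of \eqref{eq:tri-single} line up simultaneously.
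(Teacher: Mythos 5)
Your proposal is correct and follows essentially the same route as the paper's own proof: collapse the $n_4$ and $n_2$ sums via Euler's first sum \eqref{eq:Euler-1}, evaluate the $n_3$ sum with the $q$-binomial theorem \eqref{eq:q-Bin-Ser} (splitting on the factor $1+x^2yq^{4+4(n_1+n_3)}$ and recombining into $1-x^2y^2q^{8n_1+6}$), and finish by invoking \eqref{eq:tri-single} under $(x,y,q)\mapsto(xq,yq^2,q^2)$ together with $(xq;q^2)_\infty(-xq;q^2)_\infty=(x^2q^2;q^4)_\infty$. The only (immaterial) difference is bookkeeping: the paper processes the two pieces of the split as separate quadruple sums and merges them term-by-term in the final $n_1$-sum, whereas you defer the split to the $n_3$-summation and merge immediately; the algebra is identical.
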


\begin{proof}
	We first consider the inner summation over $n_4$ and obtain by \eqref{eq:Euler-1} that
	\begin{align*}
		&\sum_{n_1,n_2,n_3,n_4\ge 0}\frac{x^{n_1+n_2+2n_4}y^{n_2+n_3}q^{2\binom{n_1}{2}+2n_1n_2+4n_1n_3+4n_3n_4+n_1+3n_2+2n_3+2n_4}}{(q^2;q^2)_{n_1}(q^2;q^2)_{n_2}(q^4;q^4)_{n_3}(q^4;q^4)_{n_4}}\\
		&=\sum_{n_1,n_2,n_3\ge 0}\frac{x^{n_1+n_2}y^{n_2+n_3}q^{2\binom{n_1}{2}+2n_1n_2+4n_1n_3+n_1+3n_2+2n_3}}{(q^2;q^2)_{n_1}(q^2;q^2)_{n_2}(q^4;q^4)_{n_3}}\sum_{n_4\ge 0}\frac{(x^2q^{4n_3+2})^{n_4}}{(q^4;q^4)_{n_4}}\\
		&=\sum_{n_1,n_2,n_3\ge 0}\frac{x^{n_1+n_2}y^{n_2+n_3}q^{2\binom{n_1}{2}+2n_1n_2+4n_1n_3+n_1+3n_2+2n_3}}{(q^2;q^2)_{n_1}(q^2;q^2)_{n_2}(q^4;q^4)_{n_3}(x^2q^{4n_3+2};q^4)_\infty}\\
		&=\frac{1}{(x^2q^2;q^4)_\infty}\sum_{n_1,n_2,n_3\ge 0}\frac{x^{n_1+n_2}y^{n_2+n_3}q^{2\binom{n_1}{2}+2n_1n_2+4n_1n_3+n_1+3n_2+2n_3}(x^2q^2;q^4)_{n_3}}{(q^2;q^2)_{n_1}(q^2;q^2)_{n_2}(q^4;q^4)_{n_3}}.
	\end{align*}
	Now we further work on the inner summations over $n_2$ and $n_3$, respectively, with the application of \eqref{eq:Euler-1} and \eqref{eq:q-Bin-Ser}, and find that
	\begin{align*}
		&\sum_{n_1,n_2,n_3,n_4\ge 0}\frac{x^{n_1+n_2+2n_4}y^{n_2+n_3}q^{2\binom{n_1}{2}+2n_1n_2+4n_1n_3+4n_3n_4+n_1+3n_2+2n_3+2n_4}}{(q^2;q^2)_{n_1}(q^2;q^2)_{n_2}(q^4;q^4)_{n_3}(q^4;q^4)_{n_4}}\\
		&=\frac{1}{(x^2q^2;q^4)_\infty}\sum_{n_1\ge 0}\frac{x^{n_1}q^{2\binom{n_1}{2}+n_1}}{(q^2;q^2)_{n_1}}\sum_{n_2\ge 0}\frac{(xyq^{2n_1+3})^{n_2}}{(q^2;q^2)_{n_2}}\sum_{n_3\ge 0}\frac{(yq^{4n_1+2})^{n_3}(x^2q^2;q^4)_{n_3}}{(q^4;q^4)_{n_3}}\\
		&=\frac{1}{(x^2q^2;q^4)_\infty}\sum_{n_1\ge 0}\frac{x^{n_1}q^{2\binom{n_1}{2}+n_1}(x^2yq^{4n_1+4};q^4)_\infty}{(q^2;q^2)_{n_1}(xyq^{2n_1+3};q^2)_\infty(yq^{4n_1+2};q^4)_\infty}\\
		&=\frac{(x^2yq^{4};q^4)_\infty}{(x^2q^2;q^4)_\infty(xyq^{3};q^2)_\infty(yq^{2};q^4)_\infty}\sum_{n_1\ge 0}\frac{x^{n_1}q^{2\binom{n_1}{2}+n_1}(xyq^{3};q^2)_{n_1}(yq^{2};q^4)_{n_1}}{(q^2;q^2)_{n_1}(x^2yq^{4};q^4)_{n_1}}.
	\end{align*}
	Similarly,
	\begin{align*}
		&\sum_{n_1,n_2,n_3,n_4\ge 0}\frac{x^{n_1+n_2+2n_4+2}y^{n_2+n_3+1}q^{2\binom{n_1}{2}+2n_1n_2+4n_1n_3+4n_3n_4+5n_1+3n_2+6n_3+2n_4+4}}{(q^2;q^2)_{n_1}(q^2;q^2)_{n_2}(q^4;q^4)_{n_3}(q^4;q^4)_{n_4}}\\
		&=\frac{x^2yq^4(x^2yq^{8};q^4)_\infty}{(x^2q^2;q^4)_\infty(xyq^{3};q^2)_\infty(yq^{6};q^4)_\infty}\sum_{n_1\ge 0}\frac{x^{n_1}q^{2\binom{n_1}{2}+5n_1}(xyq^{3};q^2)_{n_1}(yq^{6};q^4)_{n_1}}{(q^2;q^2)_{n_1}(x^2yq^{8};q^4)_{n_1}}.
	\end{align*}
	Thus,
	\begin{align*}
		&\RHS\eqref{eq:quadruple-ind-new}\\
		&=\frac{(x^2yq^{4};q^4)_\infty}{(x^2q^2;q^4)_\infty(xyq^{3};q^2)_\infty(yq^{2};q^4)_\infty}\\
		&\quad\times\sum_{n_1\ge 0}\frac{x^{n_1}q^{2\binom{n_1}{2}+n_1}(xyq^{3};q^2)_{n_1}(yq^{2};q^4)_{n_1}}{(q^2;q^2)_{n_1}(x^2yq^{4};q^4)_{n_1}}\left(1+\frac{x^2yq^{4n_1+4}(1-yq^{4n_1+2})}{1-x^2yq^{4n_1+4}}\right)\\
		&=\frac{(x^2yq^{4};q^4)_\infty}{(x^2q^2;q^4)_\infty(xyq^{3};q^2)_\infty(yq^{2};q^4)_\infty}\\
		&\quad\times\sum_{n_1\ge 0}\frac{x^{n_1}q^{2\binom{n_1}{2}+n_1}(xyq^{3};q^2)_{n_1}(yq^{2};q^4)_{n_1}}{(q^2;q^2)_{n_1}(x^2yq^{4};q^4)_{n_1}}\cdot \frac{1-x^2y^2q^{8n_1+6}}{1-x^2yq^{4n_1+4}}\\
		&=\frac{(x^2yq^{8};q^4)_\infty}{(x^2q^2;q^4)_\infty(xyq^{3};q^2)_\infty(yq^{2};q^4)_\infty}\\
		&\quad\times\sum_{n_1\ge 0}\frac{x^{n_1}q^{2\binom{n_1}{2}+n_1}(1-x^2y^2q^{8n_1+6})(xyq^{3};q^2)_{n_1}(yq^{2};q^4)_{n_1}}{(q^2;q^2)_{n_1}(x^2yq^{8};q^4)_{n_1}}.
	\end{align*}
	Finally, in \eqref{eq:tri-single}, we take $(x,y,q)\mapsto (xq,yq^2,q^2)$. Then
	\begin{align*}
		\RHS\eqref{eq:quadruple-ind-new}&= \frac{(x^2yq^{8};q^4)_\infty}{(x^2q^2;q^4)_\infty(xyq^{3};q^2)_\infty(yq^{2};q^4)_\infty}\cdot \frac{(-xq;q^2)_\infty (xyq^3;q^2)_\infty}{(x^2yq^8;q^4)_\infty}\\
		&=\frac{1}{(xq;q^2)_\infty (yq^2;q^4)_\infty},
	\end{align*}
	as required.
\end{proof}

To see why \eqref{eq:quadruple-ind-new} and \eqref{eq:quadruple-ind} are equivalent, we need a functional operator $\cB$ defined on $\mathbb{C}[[q]][[x,y]]$ by
\begin{align*}
	\cB\left(\sum_{m,n\ge 0}c_{m,n}x^m y^n\right):= \sum_{m,n\ge 0}c_{m,n}q^{2\binom{m}{2}+4\binom{n}{2}} x^m y^n,
\end{align*}
where the coefficients $c_{m,n}$ are in $\mathbb{C}[[q]]$. This operator can be treated as a special case of the $q$-Borel operators \cite{Che2022b,Ram1992,Zha2002}.

\begin{proof}[Proof of \eqref{eq:quadruple-ind} from \eqref{eq:quadruple-ind-new}]
	Note that
	\begin{align*}
		&\cB\big(\RHS\eqref{eq:quadruple-ind-new}\big)\\
		&=\sum_{n_1,n_2,n_3,n_4\ge 0}\frac{x^{n_1+n_2+2n_4}y^{n_2+n_3}q^{2\binom{n_1}{2}+2n_1n_2+4n_1n_3+4n_3n_4+n_1+3n_2+2n_3+2n_4}}{(q^2;q^2)_{n_1}(q^2;q^2)_{n_2}(q^4;q^4)_{n_3}(q^4;q^4)_{n_4}}\notag\\
		&\quad\times \big(q^{2\binom{n_1+n_2+2n_4}{2}+4\binom{n_2+n_3}{2}}+x^2yq^{2\binom{n_1+n_2+2n_4+2}{2}+4\binom{n_2+n_3+1}{2}+4+4(n_1+n_3)}\big)\\
		&= \sum_{n_1,n_2,n_3,n_4\ge 0}\frac{x^{n_1+n_2+2n_4}y^{n_2+n_3}q^{n_1+3n_2+2n_3+4n_4}(1+x^2yq^{6+8(n_1+n_2+n_3+n_4)})}{(q^2;q^2)_{n_1}(q^2;q^2)_{n_2}(q^4;q^4)_{n_3}(q^4;q^4)_{n_4}}\notag\\
		&\quad\times q^{4\binom{n_1}{2}+6\binom{n_2}{2}+4\binom{n_3}{2}+8\binom{n_4}{2}+4n_1n_2+4n_1n_3+4n_1n_4+4n_2n_3+4n_2n_4+4n_3n_4},
	\end{align*}
	which is exactly the right-hand side of \eqref{eq:quadruple-ind}. On the other hand, we rewrite the left-hand side of \eqref{eq:quadruple-ind-new} in light of \eqref{eq:Euler-1},
	\begin{align*}
		\LHS\eqref{eq:quadruple-ind-new} = \sum_{m_1,m_2\ge 0}\frac{x^{m_1}y^{m_2}q^{m_1+2m_2}}{(q^2;q^2)_{m_1}(q^4;q^4)_{m_2}}.
	\end{align*}
	Hence,
	\begin{align*}
		\cB\big(\LHS\eqref{eq:quadruple-ind-new}\big) &= \sum_{m_1,m_2\ge 0}\frac{x^{m_1}y^{m_2}q^{2\binom{m_1}{2}+4\binom{m_2}{2}+m_1+2m_2}}{(q^2;q^2)_{m_1}(q^4;q^4)_{m_2}}\\
		&= (-xq;q^2)_\infty (-yq^2;q^4)_\infty,
	\end{align*}
	where \eqref{eq:Euler-2} is applied. Finally,
	\begin{align*}
		\RHS\eqref{eq:quadruple-ind} = \cB\big(\RHS\eqref{eq:quadruple-ind-new}\big) = \cB\big(\LHS\eqref{eq:quadruple-ind-new}\big) = \LHS\eqref{eq:quadruple-ind},
	\end{align*}
	as desired.
\end{proof}

\section{Span one linked partition ideals}

Now we shall consider the generating functions related to the overpartitions in $\mA$. For this purpose, we take advantage of the framework of \emph{span one linked partition ideals} introduced by Andrews \cite{And1972,And1974b,And1975} in the 1970s and reflourished in a series of recent projects mainly led by Chern \cite{ACL2022,Che2020,Che2022a,Che2022b,CL2020}. It is necessary to point out that linked partition ideals are originally considered over ordinary partitions; see, for instance, \cite[Chapter 8]{And1976} or \cite[Definition 2.1]{ACL2022}. However, according to the generic setting introduced in \cite{Che2020}, including \textbf{overpartitions} will \emph{not} bring about any extra issue.

\begin{definition}
	Assume that we are given
	\begin{itemize}[leftmargin=*,align=left]
		\renewcommand{\labelitemi}{\scriptsize$\blacktriangleright$}
		
		\item a finite set $\Pi=\{\pi_1,\pi_2,\ldots,\pi_K\}$ of \textbf{overpartitions} with $\pi_1=\varnothing$, the empty partition,
		
		\item a \textit{map of linking sets}, $\cL:\Pi\to P(\Pi)$, the power set of $\Pi$, with especially, $\cL(\pi_1)=\cL(\varnothing)=\Pi$ and $\pi_1=\varnothing\in \cL(\pi_k)$ for any $1\le k\le K$,
		
		\item and a positive integer $T$, called the \textit{modulus}, which is greater than or equal to the largest part among all \textbf{overpartitions} in $\Pi$.
	\end{itemize}
	We say a \textit{span one linked partition ideal} $\mI=\mI(\langle\Pi,\cL\rangle,T)$ is the collection of all \textbf{overpartitions} of the form
	\begin{align}\label{eq:decomp}
		\lambda&=\phi^0(\lambda_0)\oplus \phi^T(\lambda_1)\oplus \cdots \oplus \phi^{NT}(\lambda_N)\oplus \phi^{(N+1)T}(\pi_1)\oplus \phi^{(N+2)T}(\pi_1)\oplus \cdots\notag\\
		&=\phi^0(\lambda_0)\oplus \phi^T(\lambda_1)\oplus \cdots \oplus \phi^{NT}(\lambda_N),
	\end{align}
	where $\lambda_i\in\cL(\lambda_{i-1})$ for each $i$ and $\lambda_N$ is not the empty partition. We also include in $\mI$ the empty partition, which corresponds to $\phi^{0}(\pi_1)\oplus \phi^{T}(\pi_1)\oplus \cdots$. Here for any two \textbf{overpartitions} $\mu$ and $\nu$, $\mu\oplus\nu$ gives an \textbf{overpartition} by collecting all parts in $\mu$ and $\nu$, and $\phi^m(\mu)$ gives an \textbf{overpartition} by adding $m$ to each part of $\mu$ with \textbf{overlines preserved}.
\end{definition}

Recall that $\mA$ denotes the set of overpartitions such that
\begin{enumerate}[label={\textup{(\arabic*)~}},leftmargin=*,labelsep=0cm,align=left]
	\item Only odd parts may be overlined;
	
	\item The difference between any two parts is $\ge 4$ and the inequality is strict if the larger one is overlined or divisible by $4$.
\end{enumerate}

\begin{lemma}
	$\mA$ equals the span one linked partition ideal $\mI(\langle\Pi,\cL\rangle,4)$, where
	$\Pi=\{\pi_1=\varnothing,\pi_2=(1),\pi_3=(\overline{1}),\pi_4=(2),\pi_5=(3),\pi_6=(\overline{3}),\pi_7=(4)\}$ and
	\begin{equation*}
		\left\{
		\begin{aligned}
			\cL(\pi_1)&=\{\pi_1,\pi_2,\pi_3,\pi_4,\pi_5,\pi_6,\pi_7\},\\
			\cL(\pi_2)=\cL(\pi_3)&=\{\pi_1,\pi_2,\pi_4,\pi_5,\pi_6,\pi_7\},\\
			\cL(\pi_4)&=\{\pi_1,\pi_4,\pi_5,\pi_6,\pi_7\},\\
			\cL(\pi_5)=\cL(\pi_6)&=\{\pi_1,\pi_5,\pi_7\},\\
			\cL(\pi_7)&=\{\pi_1\}.
		\end{aligned}
		\right.
	\end{equation*}
\end{lemma}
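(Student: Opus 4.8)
The plan is to show that the canonical decomposition of an overpartition into residue windows of width $T=4$ realizes the block decomposition \eqref{eq:decomp}, and that under this dictionary the defining conditions (1)--(2) of $\mA$ are equivalent, entry by entry, to the membership constraints $\lambda_i\in\cL(\lambda_{i-1})$. Since the modulus is $4$, the positive integers split into consecutive windows $W_i=\{4i+1,4i+2,4i+3,4i+4\}$ for $i\ge 0$, and these are precisely the ranges of the shifted blocks $\phi^{4i}(\cdot)$. Given $\lambda$, I would set $\lambda_i$ to be the sub-multiset of parts of $\lambda$ lying in $W_i$, shifted down by $4i$ with overlines preserved, so that $\lambda=\bigoplus_{i\ge 0}\phi^{4i}(\lambda_i)$ holds tautologically; the reassembly map $\bigoplus\phi^{4i}$ is its inverse.

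First I would check that each $\lambda_i$ is a single-part block or empty: condition (2) forces any two parts of $\lambda$ to differ by at least $4$, whereas two parts inside one window $W_i$ differ by at most $3$. Hence every window holds at most one part, so each $\lambda_i$ is either $\varnothing$ or a one-part overpartition with part in $\{1,2,3,4\}$. Condition (1) then restricts overlining to odd parts, i.e.\ to residues $v\in\{1,3\}$ within a window, ruling out $(\overline 2)$ and $(\overline 4)$ and leaving exactly the seven blocks $\varnothing,(1),(\overline 1),(2),(3),(\overline 3),(4)$, which is $\Pi$. Thus the decomposition always lands in the correct block alphabet, and membership in $\mA$ reduces to which ordered pairs of consecutive blocks are admissible.

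The heart of the matter is to match condition (2) against $\cL$, and the key simplification is that only adjacent nonempty blocks need testing. If two parts occupy windows $W_i$ and $W_j$ with $j\ge i+2$, their difference is at least $4(j-i)-3\ge 5$, so (2) holds automatically; correspondingly the intervening blocks are empty, and since $\varnothing\in\cL(\pi_k)$ for all $k$ while $\cL(\varnothing)=\Pi$, the ideal imposes nothing across such gaps (this also makes the first block $\lambda_0$ unconstrained, matching the lack of a smallest-part restriction in $\mA$). For parts in adjacent windows with residues $v_a$ (from $\pi_a$) and $v_b$ (from $\pi_b$), the later part $\pi_b$ is always the larger, and the difference is $4+v_b-v_a$. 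So (2) demands $v_b\ge v_a$, sharpening to $v_b\ge v_a+1$ exactly when $\pi_b$ is overlined (namely $(\overline 1)$ or $(\overline 3)$) or divisible by $4$ (namely $(4)$). Carrying out this test for each of the six nonempty $\pi_a$ reproduces the tabulated linking sets — for example $v_a=3$ (the case $\pi_a=(3)$ or $(\overline 3)$) admits only $v_b\ge 3$ and among these excludes the overlined $(\overline 3)$ while retaining $(3)$ and $(4)$, yielding $\{\pi_1,\pi_5,\pi_7\}$, while $\pi_a=(4)$ is itself divisible by $4$ with $v_a=4$ and hence admits no nonempty successor, yielding $\{\pi_1\}$.

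I expect the only genuine subtlety, and thus the main obstacle, to be correctly tracking the asymmetry built into condition (2): the strictness is governed by the \emph{larger} of the two parts, i.e.\ by the successor block $\pi_b$ and never by $\pi_a$. This is precisely what forces $\cL(\pi_2)=\cL(\pi_3)$ and $\cL(\pi_5)=\cL(\pi_6)$, since overlining the smaller part is irrelevant, and it is the step most prone to a direction error. Once the adjacent-window analysis is verified to reproduce every entry of the displayed $\cL$ and the gapped case is seen to be vacuous on both sides, the decomposition map and its reassembly inverse are mutually inverse bijections between $\mA$ and $\mI(\langle\Pi,\cL\rangle,4)$, establishing the claimed equality.
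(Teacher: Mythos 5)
Your proposal is correct and follows essentially the same route as the paper: decompose an overpartition into windows $\{4i+1,\ldots,4i+4\}$, observe that the gap condition forces at most one part per window so each block lies in $\Pi$, and then verify that condition (2) between parts in adjacent windows matches the linking sets $\cL$ entry by entry. Your write-up is in fact more complete than the paper's, which declares one direction clear and carries out only the $\pi_2,\pi_3$ case explicitly, whereas your uniform residue inequality ($v_b\ge v_a$, strict when the successor is overlined or divisible by $4$) handles all cases at once.
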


\begin{proof}
	It is clear that all overpartitions in $\mI(\langle\Pi,\cL\rangle,4)$ satisfy the conditions for $\mA$. For the other direction, we decompose each overpartition in $\mA$ into blocks $\bB_0,\bB_1,\ldots$ such that all parts (including those that are overlined) between $4i+1$ and $4i+4$ fall into block $\bB_i$. It is plain that $\phi^{-4i}(\bB_i)$ is exclusively from $\Pi$. Further, if $\phi^{-4i}(\bB_i)$ is $\pi_1$ so that $\bB_i$ is $\varnothing$, then $\phi^{-4(i+1)}(\bB_{i+1})$ can be any among $\Pi$. If $\phi^{-4i}(\bB_i)$ is $\pi_2$ or $\pi_3$ so that $\bB_i$ is $(4i+1)$ or $(\overline{4i+1})$, then $\bB_{i+1}$ cannot be $(\overline{4i+5})$ by the second condition for $\mA$ so that $\phi^{-4(i+1)}(\bB_{i+1})$ cannot be $\pi_3$. One may carry out similar arguments for other possibilities of $\phi^{-4i}(\bB_i)$ and the details are omitted.
\end{proof}

\begin{example}
	As in \eqref{eq:decomp}, we decompose the overpartition $\overline{1}+8+14+\overline{19}+23+27$ by
	$$\phi^{0}(\overline{1})\oplus \phi^{4}(4)\oplus \phi^{8}(\varnothing) \oplus \phi^{12}(2) \oplus \phi^{16}(\overline{3}) \oplus \phi^{20}(3) \oplus \phi^{24}(3),$$
	which corresponds to the chain $\pi_3\pi_7\pi_1\pi_4\pi_6\pi_5\pi_5\pi_1\pi_1\cdots$.
\end{example}

Throughout, we always decompose overpartitions $\lambda\in \mA=\mI(\langle\Pi,\cL\rangle,4)$ as in \eqref{eq:decomp}. Now define for $1\le k\le 7$:
\begin{align}
	G_k(x)=G_k(x,y_1,y_2,z,q):=\sum_{\substack{\lambda\in\mA\\\lambda_0=\pi_k}}x^{\sharp(\lambda)}y_1^{\sharp_{2,4}(\lambda)}y_2^{\sharp_{0,4}(\lambda)}z^{\sfO(\lambda)}q^{|\lambda|}.
\end{align}
In other words, $G_k(x)$ is the generating function for overpartitions in $\mA$ whose first decomposed block $\bB_0$ equals $\pi_k$. From the above construction, it is plain that
\begin{align*}
	G_k(x)=x^{\sharp(\pi_k)}y_1^{\sharp_{2,4}(\pi_k)}y_2^{\sharp_{0,4}(\pi_k)}z^{\sfO(\pi_k)}q^{|\pi_k|}\sum_{j:\pi_j\in\cL(\pi_k)} G_j(xq^4).
\end{align*}
Hence,
\begin{equation}
	\begin{pmatrix}
		G_1(x)\\
		G_2(x)\\
		\vdots\\
		G_{7}(x)
	\end{pmatrix}
	=
	\bW.\bA.
	\begin{pmatrix}
		G_1(xq^4)\\
		G_2(xq^4)\\
		\vdots\\
		G_{7}(xq^4)
	\end{pmatrix},
\end{equation}
where
$$\bW=\diag(1,xq,xzq,xy_1q^2,xq^3,xzq^3,xy_2q^4)$$
and
$$\setcounter{MaxMatrixCols}{15}
\bA=\begin{pmatrix}
	1 & 1 & 1 & 1 & 1 & 1 & 1\\
	1 & 1 & 0 & 1 & 1 & 1 & 1\\
	1 & 1 & 0 & 1 & 1 & 1 & 1\\
	1 & 0 & 0 & 1 & 1 & 1 & 1\\
	1 & 0 & 0 & 0 & 1 & 0 & 1\\
	1 & 0 & 0 & 0 & 1 & 0 & 1\\
	1 & 0 & 0 & 0 & 0 & 0 & 0
\end{pmatrix}.$$
We further write
\begin{align}\label{eq:F-G}
	\begin{pmatrix}
		F_1(x)\\
		F_2(x)\\
		\vdots\\
		F_{7}(x)
	\end{pmatrix}=\bA.\begin{pmatrix}
		G_1(x)\\
		G_2(x)\\
		\vdots\\
		G_{7}(x)
	\end{pmatrix}.
\end{align}
Then
\begin{align}\label{eq:F}
	\begin{pmatrix}
		F_1(x)\\
		F_2(x)\\
		\vdots\\
		F_{7}(x)
	\end{pmatrix}=\bA.\bW.\begin{pmatrix}
		F_1(xq^4)\\
		F_2(xq^4)\\
		\vdots\\
		F_{7}(xq^4)
	\end{pmatrix}.
\end{align}

\section{Quinvariate generating functions}

Here our object is to establish related generating functions for $\mA$. Letting $S$ be a collection of parts, we denote by $\mA_S$ the subset of overpartitions in $\mA$ such that parts from $S$ are forbidden.

\begin{theorem}\label{th:quin-gf}
	We have
	\begin{align}
		&\sum_{\lambda\in \mA} x^{\sharp(\lambda)}y_1^{\sharp_{2,4}(\lambda)}y_2^{\sharp_{0,4}(\lambda)}z^{\sfO(\lambda)}q^{|\lambda|}\notag\\
		&\qquad = \sum_{n_1,n_2,n_3,n_4\ge 0}\frac{x^{n_1+n_2+n_3+n_4}y_1^{n_3}y_2^{n_4}z^{n_2}q^{n_1+n_2+2n_3+4n_4}}{(q^2;q^2)_{n_1}(q^2;q^2)_{n_2}(q^4;q^4)_{n_3}(q^4;q^4)_{n_4}}\notag\\
		&\qquad\quad\times q^{4\binom{n_1}{2}+6\binom{n_2}{2}+4\binom{n_3}{2}+8\binom{n_4}{2}+4n_1n_2+4n_1n_3+4n_1n_4+4n_2n_3+4n_2n_4+4n_3n_4},\\
		&\!\!\sum_{\lambda\in \mA_{\{\overline{1}\}}} x^{\sharp(\lambda)}y_1^{\sharp_{2,4}(\lambda)}y_2^{\sharp_{0,4}(\lambda)}z^{\sfO(\lambda)}q^{|\lambda|}\notag\\
		&\qquad = \sum_{n_1,n_2,n_3,n_4\ge 0}\frac{x^{n_1+n_2+n_3+n_4}y_1^{n_3}y_2^{n_4}z^{n_2}q^{n_1+3n_2+2n_3+4n_4}}{(q^2;q^2)_{n_1}(q^2;q^2)_{n_2}(q^4;q^4)_{n_3}(q^4;q^4)_{n_4}}\notag\\
		&\qquad\quad\times q^{4\binom{n_1}{2}+6\binom{n_2}{2}+4\binom{n_3}{2}+8\binom{n_4}{2}+4n_1n_2+4n_1n_3+4n_1n_4+4n_2n_3+4n_2n_4+4n_3n_4},\label{eq:gf-A-1ov}\\
		&\!\!\!\!\sum_{\lambda\in \mA_{\{1,\overline{1}\}}} x^{\sharp(\lambda)}y_1^{\sharp_{2,4}(\lambda)}y_2^{\sharp_{0,4}(\lambda)}z^{\sfO(\lambda)}q^{|\lambda|}\notag\\
		&\qquad = \sum_{n_1,n_2,n_3,n_4\ge 0}\frac{x^{n_1+n_2+n_3+n_4}y_1^{n_3}y_2^{n_4}z^{n_2}q^{3n_1+3n_2+2n_3+4n_4}}{(q^2;q^2)_{n_1}(q^2;q^2)_{n_2}(q^4;q^4)_{n_3}(q^4;q^4)_{n_4}}\notag\\
		&\qquad\quad\times q^{4\binom{n_1}{2}+6\binom{n_2}{2}+4\binom{n_3}{2}+8\binom{n_4}{2}+4n_1n_2+4n_1n_3+4n_1n_4+4n_2n_3+4n_2n_4+4n_3n_4},\\
		&\!\!\!\!\!\!\!\!\sum_{\lambda\in \mA_{\{1,\overline{1},2,\overline{3}\}}} x^{\sharp(\lambda)}y_1^{\sharp_{2,4}(\lambda)}y_2^{\sharp_{0,4}(\lambda)}z^{\sfO(\lambda)}q^{|\lambda|}\notag\\
		&\qquad = \sum_{n_1,n_2,n_3,n_4\ge 0}\frac{x^{n_1+n_2+n_3+n_4}y_1^{n_3}y_2^{n_4}z^{n_2}q^{3n_1+5n_2+6n_3+4n_4}}{(q^2;q^2)_{n_1}(q^2;q^2)_{n_2}(q^4;q^4)_{n_3}(q^4;q^4)_{n_4}}\notag\\
		&\qquad\quad\times q^{4\binom{n_1}{2}+6\binom{n_2}{2}+4\binom{n_3}{2}+8\binom{n_4}{2}+4n_1n_2+4n_1n_3+4n_1n_4+4n_2n_3+4n_2n_4+4n_3n_4}.
	\end{align}
\end{theorem}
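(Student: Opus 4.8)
The plan is to recognize the four generating functions on the left-hand sides as four of the components $F_i(x)$ introduced in \eqref{eq:F-G}, and then to prove their closed forms by checking that the asserted quadruple sums solve the $q$-difference system \eqref{eq:F}. The first step is the identification. Since a part equal to $1$, $\overline 1$, $2$, or $\overline 3$ can only occur in the initial block $\bB_0=\lambda_0$ of the decomposition \eqref{eq:decomp} (each block carries at most one part, as any two parts differ by at least $4$), forbidding such a part is exactly the same as excluding the corresponding $\pi_k$ as a value of $\lambda_0$. Reading off the rows of $\bA$ and abbreviating the weight by $w(\lambda)=x^{\sharp(\lambda)}y_1^{\sharp_{2,4}(\lambda)}y_2^{\sharp_{0,4}(\lambda)}z^{\sfO(\lambda)}q^{|\lambda|}$, this shows that $F_1,F_2,F_4,F_5$ are the generating functions $\sum w(\lambda)$ over $\mA$, $\mA_{\{\overline1\}}$, $\mA_{\{1,\overline1\}}$, $\mA_{\{1,\overline1,2,\overline3\}}$, respectively. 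Because rows $2,3$ and rows $5,6$ of $\bA$ coincide, we also get $F_3=F_2$ and $F_6=F_5$, so \eqref{eq:F} collapses to a system in the five unknowns $F_1,F_2,F_4,F_5,F_7$.

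Taking successive differences of the equations in \eqref{eq:F} turns it into the equivalent cascade
\begin{gather*}
F_1-F_2=xzq\,F_2(xq^4),\qquad F_2-F_4=xq\,F_2(xq^4),\\
F_4-F_5=xy_1q^2\,F_4(xq^4)+xzq^3\,F_5(xq^4),\\
F_5-F_7=xq^3\,F_5(xq^4)+xy_2q^4\,F_7(xq^4),\qquad F_7=F_1(xq^4),
\end{gather*}
together with $F_i(0)=1$ for every $i$. Next I would introduce the candidate functions $R_1,R_2,R_4,R_5$ equal to the four right-hand sides in the theorem, together with an auxiliary sum $R_7$ built from the same data but with linear exponent $q^{5n_1+5n_2+6n_3+8n_4}$ (so that tautologically $R_7(x)=R_1(xq^4)$). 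All five share the same denominators and the same quadratic factor $q^{4\binom{n_1}{2}+6\binom{n_2}{2}+4\binom{n_3}{2}+8\binom{n_4}{2}+4n_1n_2+4n_1n_3+4n_1n_4+4n_2n_3+4n_2n_4+4n_3n_4}$ and differ only in the linear exponent. Each $R_i(0)=1$, since only the zero term survives at $x=0$.

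The heart of the argument is to verify that the $R_i$ obey the cascade above. For a single-term relation such as $F_2-F_4=xq\,F_2(xq^4)$, the difference of linear exponents factors as a common power of $q$ times $1-q^{2n_1}$; absorbing $1-q^{2n_1}$ into $1/(q^2;q^2)_{n_1}$ and shifting $n_1\mapsto n_1+1$ reproduces $xq\,R_2(xq^4)$ term by term, the bookkeeping being forced by the identity $Q(n_1+1,n_2,n_3,n_4)-Q(n_1,n_2,n_3,n_4)=4n_1+4n_2+4n_3+4n_4$ for the quadratic form $Q$ (and the analogous identities in $n_2,n_3,n_4$). The two-term relations are the main obstacle: there the exponent difference carries a factor $1-q^{a+b}$, namely $1-q^{2n_2+4n_3}$ in the $F_4-F_5$ relation and $1-q^{2n_1+4n_4}$ in the $F_5-F_7$ relation, and the key device is the splitting $1-q^{a+b}=(1-q^a)+q^a(1-q^b)$. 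One must group the exponents so that, after the two separate index shifts, the first piece reproduces the $F_4$ (resp. $F_5$) term and the second the $F_5$ (resp. $F_7$) term; checking that the leftover powers of $q$ are precisely absorbed by the cross terms $4n_in_j$ of $Q$ is the one genuinely delicate computation, and it is exactly what pins down the specific quadratic form appearing in the statement.

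Finally I would close by uniqueness. Writing $F_i(x)=\sum_{n\ge0}f_{i,n}x^n$ and comparing coefficients of $x^n$ in \eqref{eq:F}, the all-ones first column of $\bA\bW$ contributes $q^{4n}f_{1,n}$, while every other column carries a factor $x$ and so feeds in only coefficients of order $n-1$. Hence $f_{1,n}(1-q^{4n})$, and then each $f_{i,n}$, is determined recursively by lower-order data, so \eqref{eq:F} has a unique formal power series solution with the prescribed value at $x=0$. Since the $R_i$ satisfy the same recurrences and the same initial conditions, we conclude $R_i=F_i$ for $i\in\{1,2,4,5\}$, which is precisely the four asserted identities.
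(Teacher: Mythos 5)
Your proposal is correct and follows essentially the same route as the paper: identify the four generating functions with $F_1,F_2,F_4,F_5$ via the rows of $\bA$, show that the explicit quadruple sums satisfy the $q$-difference system \eqref{eq:F} with the right values at $x=0$, and conclude by uniqueness of the formal power series solution. Your hands-on verifications (absorbing $1-q^{2n_r}$ or $1-q^{4n_r}$ into the Pochhammer symbols, shifting indices, and splitting $1-q^{a+b}=(1-q^a)+q^a(1-q^b)$ for the two-term relations) are precisely inline re-derivations of the paper's Lemma \ref{le:rec-key}, and your cascade of successive differences corresponds edge-for-edge to the binary tree in Figure \ref{fig:btree-proof} used to verify \eqref{eq:H-relation-main}.
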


To begin with, we note that
\begin{align*}
	\sum_{\lambda\in \mA} x^{\sharp(\lambda)}y_1^{\sharp_{2,4}(\lambda)}y_2^{\sharp_{0,4}(\lambda)}z^{\sfO(\lambda)}q^{|\lambda|}&=\sum_{k\in\{1,2,3,4,5,6,7\}}G_k(x)=F_1(x),\\
	\sum_{\lambda\in \mA_{\{\overline{1}\}}} x^{\sharp(\lambda)}y_1^{\sharp_{2,4}(\lambda)}y_2^{\sharp_{0,4}(\lambda)}z^{\sfO(\lambda)}q^{|\lambda|}&=\sum_{k\in\{1,2,4,5,6,7\}}G_k(x)=F_2(x)=F_3(x),\\
	\sum_{\lambda\in \mA_{\{1,\overline{1}\}}} x^{\sharp(\lambda)}y_1^{\sharp_{2,4}(\lambda)}y_2^{\sharp_{0,4}(\lambda)}z^{\sfO(\lambda)}q^{|\lambda|}&=\sum_{k\in\{1,4,5,6,7\}}G_k(x)=F_4(x),\\
	\sum_{\lambda\in \mA_{\{1,\overline{1},2,\overline{3}\}}} x^{\sharp(\lambda)}y_1^{\sharp_{2,4}(\lambda)}y_2^{\sharp_{0,4}(\lambda)}z^{\sfO(\lambda)}q^{|\lambda|}&=\sum_{k\in\{1,5,7\}}G_k(x)=F_5(x)=F_6(x).
\end{align*}
Thus it suffices to determine the expression of each $F_k(x)$. If we treat \eqref{eq:F} as a system of $q$-difference equations, its formal power series solution $\big(F_1(x),\ldots,F_7(x)\big)$ is uniquely determined by $\big(F_1(0),\ldots,F_7(0)\big)$. Further, according to our construction, $F_k(0)=1$ for each $k$.

Recall that in \cite{Che2020} and \cite{Che2022a}, a generic family of $q$-multi-summations was considered. Let $R$ be a given positive integer and fix a symmetric matrix $\underline{\boldsymbol{\alpha}}=(\alpha_{i,j})\in\Mat_{R\times R}(\mathbb{N})$ and a vector $\underline{\mathbf{A}}=(A_r)\in \mathbb{N}_{>0}^R$. Also fix $J$ vectors $\underline{\boldsymbol{\gamma_j}}=(\gamma_{j,r})\in \mathbb{N}_{\ge 0}^R$ for $j=1,2,\ldots,J$. Define for indeterminates $x_1,x_2,\ldots,x_J$ and $q$ the following $q$-multi-summation $H(\ubb)=H(\beta_1,\ldots,\beta_R)$ with $\ubb\in\mathbb{Z}^R$:
\begin{align*}
	H(\ubb)=H(\beta_1,\ldots,\beta_R)&:=\sum_{n_1,\ldots,n_R\ge 0}\frac{x_1^{\sum_{r=1}^R \gamma_{1,r} n_r}\cdots x_J^{\sum_{r=1}^R \gamma_{J,r} n_r}}{(q^{A_1};q^{A_1})_{n_1}\cdots (q^{A_R};q^{A_R})_{n_R}}\notag\\
	&\;\quad\times q^{\sum_{r=1}^R \alpha_{r,r}\binom{n_r}{2}+\sum_{1\le i< j\le R}\alpha_{i,j}n_i n_j+ \sum_{r=1}^R \beta_r n_r}.
\end{align*}
We require a recurrence for $H(\ubb)$ given in \cite[Lemma 2.1]{Che2022a}.

\begin{lemma}\label{le:rec-key}
	For $1\le r\le R$, we have
	\begin{align*}
		H(\beta_1,\ldots,\beta_r,\ldots,\beta_R)&=H(\beta_1,\ldots,\beta_r+A_r,\ldots,\beta_R)\notag\\
		&+x_1^{\gamma_{1,r}}\cdots x_J^{\gamma_{J,r}}q^{\beta_r}H(\beta_1+\alpha_{r,1},\ldots,\beta_r+\alpha_{r,r},\ldots,\beta_R+\alpha_{r,R}).
	\end{align*}
\end{lemma}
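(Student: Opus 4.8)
The plan is to prove the recurrence by a direct manipulation of the defining multi-sum, isolating the dependence on the single summation index $n_r$. First I would form the difference of the two series $H(\beta_1,\ldots,\beta_r,\ldots,\beta_R)$ and $H(\beta_1,\ldots,\beta_r+A_r,\ldots,\beta_R)$. Since these two series share every factor except the linear contribution of $\beta_r$ to the $q$-exponent, their summands differ only through the factor $q^{\beta_r n_r}$ versus $q^{(\beta_r+A_r)n_r}$. Hence termwise the difference carries the factor $q^{\beta_r n_r}(1-q^{A_r n_r})$, while all remaining factors (the powers of $x_1,\ldots,x_J$, the reciprocal $q$-Pochhammer symbols, and the quadratic part of the $q$-exponent) are left untouched.

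Next I would absorb the factor $1-q^{A_r n_r}$ into the Pochhammer denominator. Using $(q^{A_r};q^{A_r})_{n_r}=(1-q^{A_r n_r})(q^{A_r};q^{A_r})_{n_r-1}$ for $n_r\ge 1$, the ratio $(1-q^{A_r n_r})/(q^{A_r};q^{A_r})_{n_r}$ collapses to $1/(q^{A_r};q^{A_r})_{n_r-1}$; moreover the $n_r=0$ term of the difference vanishes because $1-q^{0}=0$. Thus the difference is genuinely a sum over $n_r\ge 1$ with all other indices still running over $\mathbb{N}$, and I would reindex via $n_r=m_r+1$, $m_r\ge 0$, to bring it back into the shape of an $H$-sum.

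The crux of the argument is tracking precisely how each exponent transforms under $n_r\mapsto m_r+1$. For the $x_j$-powers, the exponent $\sum_s\gamma_{j,s}n_s$ increases by $\gamma_{j,r}$, producing the global prefactor $x_1^{\gamma_{1,r}}\cdots x_J^{\gamma_{J,r}}$. For the $q$-exponent I would compute the increment of $\sum_s\alpha_{s,s}\binom{n_s}{2}+\sum_{i<j}\alpha_{i,j}n_in_j+\sum_s\beta_sn_s$ when $n_r$ alone is raised by $1$: the diagonal term contributes $\alpha_{r,r}m_r$ via $\binom{m_r+1}{2}-\binom{m_r}{2}=m_r$, the off-diagonal terms contribute $\sum_{s\ne r}\alpha_{r,s}m_s$ after invoking the symmetry $\alpha_{i,r}=\alpha_{r,i}$ to merge the $i<r$ and $j>r$ pieces, and the linear term contributes $\beta_r$. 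Collecting these gives exactly an extra factor $q^{\beta_r}\cdot q^{\sum_s\alpha_{r,s}m_s}$.

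Finally I would recognize that, after reindexing, the summand is precisely $x_1^{\gamma_{1,r}}\cdots x_J^{\gamma_{J,r}}q^{\beta_r}$ times the generic summand of $H$ in which each $\beta_s$ is replaced by $\beta_s+\alpha_{r,s}$; summing over $m_1,\ldots,m_R\ge 0$ then produces $x_1^{\gamma_{1,r}}\cdots x_J^{\gamma_{J,r}}q^{\beta_r}H(\beta_1+\alpha_{r,1},\ldots,\beta_R+\alpha_{r,R})$, which is the stated identity. The main obstacle is the bookkeeping in the third step: the increment of the quadratic form must be assembled with care, and it is exactly the symmetry of $\uba$ that guarantees the combined off-diagonal increment is the clean linear form $\sum_s\alpha_{r,s}m_s$, matching the simultaneous shift $\beta_s\mapsto\beta_s+\alpha_{r,s}$ on every coordinate.
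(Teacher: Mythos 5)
Your proof is correct. Note that this paper does not actually prove the lemma---it is quoted from \cite[Lemma 2.1]{Che2022a}---but your argument (forming the difference of the two $H$-series, absorbing $1-q^{A_r n_r}$ into the Pochhammer factor via $(q^{A_r};q^{A_r})_{n_r}=(1-q^{A_r n_r})(q^{A_r};q^{A_r})_{n_r-1}$, and reindexing $n_r=m_r+1$, with the symmetry of $\uba$ converting the cross-term increments into the simultaneous shift $\beta_s\mapsto\beta_s+\alpha_{r,s}$) is precisely the standard proof of this recurrence given in that reference.
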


As in \cite{Che2022a}, we illustrate the above relation by a binary tree with the coordinate $\beta_r$ shown in boldface; see Figure \ref{fig:node-children}.

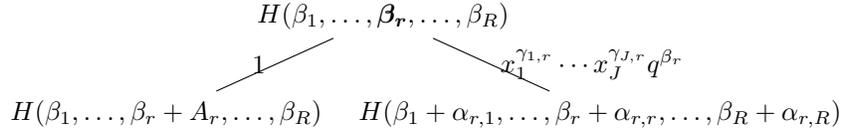
\begin{figure}[h]
	\caption{Node $H(\beta_1,\ldots,\beta_r,\ldots,\beta_R)$ and its children}\label{fig:node-children}
	\medskip\medskip
	\begin{forest}
		for tree={l sep=20pt}
		[{$H(\beta_1,\ldots,\boldsymbol{\beta_r},\ldots,\beta_R)$} 
		[{$H(\beta_1,\ldots,\beta_r+A_r,\ldots,\beta_R)$}, edge label={node[midway,left] {$1$}} ]
		[{$H(\beta_1+\alpha_{r,1},\ldots,\beta_r+\alpha_{r,r},\ldots,\beta_R+\alpha_{r,R})$}, edge label={node[midway,right] {$x_1^{\gamma_{1,r}}\cdots x_J^{\gamma_{J,r}}q^{\beta_r}$}} ] 
		]
	\end{forest}
\end{figure}

Now let us choose
$$\uba=\begin{pmatrix}
	4 & 4 & 4 & 4\\4 & 6 & 4 & 4\\4 & 4 & 4 & 4\\4 & 4 & 4 & 8
\end{pmatrix},\qquad\qquad\qquad \ubA=(2,2,4,4),$$
and
\begin{alignat*}{2}
	x_1&=x,\qquad\qquad &&\ubgg{1}=(1,1,1,1),\\
	x_2&=y_1,\qquad\qquad &&\ubgg{2}=(0,0,1,0),\\
	x_3&=y_2,\qquad\qquad &&\ubgg{3}=(0,0,0,1),\\
	x_4&=z,\qquad\qquad &&\ubgg{4}=(0,1,0,0).
\end{alignat*}
To prove Theorem \ref{th:quin-gf}, it is sufficient to confirm that
\begin{align}\label{eq:H-relation-main}
	\scalebox{.685}{%
	$\begin{pmatrix}
		H(1,1,2,4)\\
		H(1,3,2,4)\\
		H(1,3,2,4)\\
		H(3,3,2,4)\\
		H(3,5,6,4)\\
		H(3,5,6,4)\\
		H(5,5,6,8)
	\end{pmatrix}=\begin{pmatrix}
	1 & 1 & 1 & 1 & 1 & 1 & 1\\
	1 & 1 & 0 & 1 & 1 & 1 & 1\\
	1 & 1 & 0 & 1 & 1 & 1 & 1\\
	1 & 0 & 0 & 1 & 1 & 1 & 1\\
	1 & 0 & 0 & 0 & 1 & 0 & 1\\
	1 & 0 & 0 & 0 & 1 & 0 & 1\\
	1 & 0 & 0 & 0 & 0 & 0 & 0
	\end{pmatrix}.\begin{pmatrix}
	1\\
	& xq\\
	&& xzq\\
	&&& xy_1q^2\\
	&&&& xq^3\\
	&&&&& xzq^3\\
	&&&&&& xy_2q^4
	\end{pmatrix}.\begin{pmatrix}
	H(5,5,6,8)\\
	H(5,7,6,8)\\
	H(5,7,6,8)\\
	H(7,7,6,8)\\
	H(7,9,10,8)\\
	H(7,9,10,8)\\
	H(9,9,10,12)
\end{pmatrix}.$}
\end{align}

\begin{figure}[ht]
	\caption{The binary tree for \eqref{eq:H-relation-main}}\label{fig:btree-proof}
	\medskip\medskip
	{\footnotesize
		\begin{forest}
			for tree={l sep=15pt}
			[{$H(1,\boldsymbol{1},2,4)$} 
				[{$H(\boldsymbol{1},3,2,4)$}, edge label={node[midway,left] {$1$}}  
					[{$H(3,3,\boldsymbol{2},4)$}, edge label={node[midway,left] {$1$}} 
						[{$H(3,\boldsymbol{3},6,4)$}, edge label={node[midway,left] {$1$}} 
							[{$H(\boldsymbol{3},5,6,4)$}, edge label={node[midway,left] {$1$}} 
								[{$H(5,5,6,\boldsymbol{4})$}, edge label={node[midway,left] {$1$}} 
									[{$H(5,5,6,8)$}, edge label={node[midway,left] {$1$}} 
									]
									[{$H(9,9,10,12)$}, edge label={node[midway,right] {$xy_2q^{4}$}}
									]
								]
								[{$H(7,9,10,8)$}, edge label={node[midway,right] {$xq^{3}$}}
								]
							]
							[{$H(7,9,10,8)$}, edge label={node[midway,right] {$xzq^{3}$}}
							]
						]
						[{$H(7,7,6,8)$}, edge label={node[midway,right] {$xy_1q^{2}$}}
						]
					] 
					[{$H(5,7,6,8)$}, edge label={node[midway,right] {$xq$}}
					]
				]
				[{$H(5,7,6,8)$}, edge label={node[midway,right] {$xzq$}}
				]
			]
		\end{forest}
	}
\end{figure}
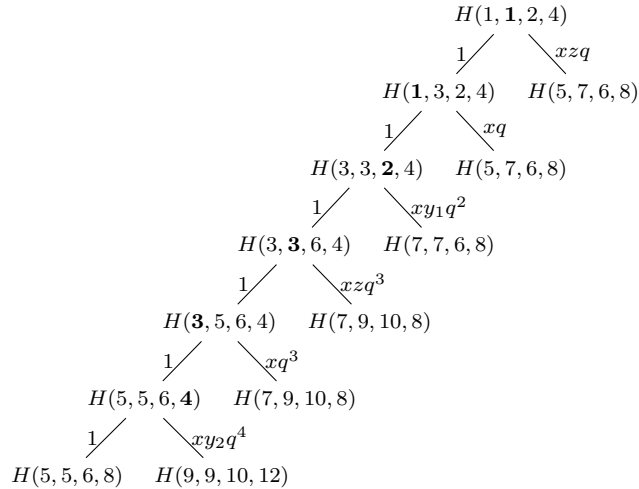

\begin{proof}
	We make use of Lemma \ref{le:rec-key} and illustrate the proof by the binary tree in Figure \ref{fig:btree-proof}. For instance, from the node $H(\boldsymbol{3},5,6,4)$ at the fifth level, we apply Lemma \ref{le:rec-key} to the first coordinate and obtain
	\begin{align*}
		H(\boldsymbol{3},5,6,4) = H(5,5,6,4) + xq^3H(7,9,10,8).
	\end{align*}
	We further apply Lemma \ref{le:rec-key} to the fourth coordinate of $H(5,5,6,\boldsymbol{4})$ and obtain
	\begin{align*}
		H(5,5,6,\boldsymbol{4}) = H(5,5,6,8) + xy_2q^4H(9,9,10,12).
	\end{align*}
	Hence,
	\begin{align*}
		H(3,5,6,4) = H(5,5,6,8) + xq^3H(7,9,10,8) + xy_2q^4H(9,9,10,12),
	\end{align*}
	thereby confirming the fifth and sixth rows of \eqref{eq:H-relation-main}. Other rows can be argued in the same vein.
\end{proof}

\section{Proof of Theorem \ref{th:And-tri-ext}}

Theorem \ref{th:And-tri-ext} is a direct consequence of \eqref{eq:quadruple-ind} and \eqref{eq:gf-A-1ov}. Recall that $\mA_{\{\overline{1}\}}^\veebar$ denotes the set of overpartitions such that
\begin{enumerate}[label={\textup{(\arabic*)~}},leftmargin=*,labelsep=0cm,align=left]
	\item Only odd parts {\bfseries\boldmath larger than $1$} may be overlined;
	
	\item The difference between any two parts is $\ge 4$ and the inequality is strict if the larger one is overlined or divisible by $4$ with {\bfseries\boldmath the exception that $\overline{5}$ and $1$ may simultaneously appear as parts}.
\end{enumerate}

Now there are two cases. \textbf{(i).}~If $\overline{5}$ and $1$ do not simultaneously appear as parts, then such overpartitions are exactly those in $\mA_{\{\overline{1}\}}$. \textbf{(ii).}~If $\overline{5}$ and $1$ simultaneously appear as parts, then apart from them, the smallest part is at least of size $9$, while $\overline{9}$ cannot be a part since if this is the case, we have parts $\overline{9}+\overline{5}$, violating the second condition. Now removing parts $\overline{5}$ and $1$, subtracting $8$ from each of the remaining parts, and preserving all overlines, we again get a partition in $\mA_{\{\overline{1}\}}$. Consequently,
\begin{align*}
	\sum_{\lambda\in \mA_{\{\overline{1}\}}^\veebar}x^{\sharp(\lambda)}y_1^{\sharp_{2,4}(\lambda)}y_2^{\sharp_{0,4}(\lambda)}z^{\sfO(\lambda)}q^{|\lambda|}&= \sum_{\lambda\in \mA_{\{\overline{1}\}}}x^{\sharp(\lambda)}y_1^{\sharp_{2,4}(\lambda)}y_2^{\sharp_{0,4}(\lambda)}z^{\sfO(\lambda)}q^{|\lambda|}\\
	&+x^2zq^6\sum_{\lambda\in \mA_{\{\overline{1}\}}}(xq^8)^{\sharp(\lambda)}y_1^{\sharp_{2,4}(\lambda)}y_2^{\sharp_{0,4}(\lambda)}z^{\sfO(\lambda)}q^{|\lambda|}.
\end{align*}
It follows that
\begin{align*}
	&\sum_{\ell,m,n\ge 0}A(n,m,\ell)x^m y^\ell q^n\\
	&\quad= \sum_{\lambda\in \mA_{\{\overline{1}\}}^\veebar}x^{\sharp(\lambda)}(x^{-1}y)^{\sharp_{2,4}(\lambda)}x^{\sharp_{0,4}(\lambda)}y^{\sfO(\lambda)}q^{|\lambda|}\\
	&\quad= \sum_{n_1,n_2,n_3,n_4\ge 0}\frac{x^{n_1+n_2+2n_4}y^{n_2+n_3}q^{n_1+3n_2+2n_3+4n_4}(1+x^2yq^{6+8(n_1+n_2+n_3+n_4)})}{(q^2;q^2)_{n_1}(q^2;q^2)_{n_2}(q^4;q^4)_{n_3}(q^4;q^4)_{n_4}}\notag\\
	&\quad\quad\times q^{4\binom{n_1}{2}+6\binom{n_2}{2}+4\binom{n_3}{2}+8\binom{n_4}{2}+4n_1n_2+4n_1n_3+4n_1n_4+4n_2n_3+4n_2n_4+4n_3n_4}\\
	&\quad=(-xq;q^2)_\infty (-yq^2;q^4)_\infty\\
	&\quad=\sum_{\ell,m,n\ge 0}B(n,m,\ell)x^m y^\ell q^n.
\end{align*}

\section{Conclusion}

The results in this paper together with those in \cite{ACL2022,Che2020,Che2022a,Che2022b,CL2020} make clear that the power of linked partition ideals, first defined in \cite{And1974b}, is only now coming into prominence. In addition, the study of linked partition ideals began with an effort to expand the world of partition identities via $q$-difference equations. This latter topic, considered extensively in \cite{And1968} and utilized effectively in this paper, should further develop in parallel with the theory of linked partition ideals.  

Finally, we see in this paper a new level of partition identity refinement building on the refinements in \cite{And2022} which in turn refined Glaisher's ancient theorem \cite{Gla1883}. It is natural to ask which of the classical partition identities are amenable to refinements and what are the limits of this exploration. We note, for example, that the Rogers--Ramanujan identities themselves have no known refinements along the lines considered here.

\subsection*{Acknowledgements}

G.~E.~Andrews was supported by a grant (\#633284) from the Simons Foundation. S.~Chern was supported by a Killam Postdoctoral Fellowship from the Killam Trusts.

\bibliographystyle{amsplain}

\end{document}